\newtheorem{theorem}{Theorem}[section]
\newtheorem{proposition}[theorem]{Proposition}
\newtheorem{corollary}[theorem]{Corollary}
\theoremstyle{definition}
\newtheorem{definition}[theorem]{Definition}
\theoremstyle{remark}
\newtheorem{remark}[theorem]{Remark}
\numberwithin{equation}{section}
\newcommand{\Pf}{\mathrm{Pf}}
\begin{document}
\title[Deformation of Cayley's hyperdeterminants]{Deformation of Cayley's hyperdeterminants}
\author{Tommy Wuxing Cai}
\address{
Department of Mathematics, University of Manitoba, Winnipeg, MB R3T 2N2 Canada
}
\email{cait@myumanitoba.ca}
\author{Naihuan Jing}
\address{Department of Mathematics,
   North Carolina State University,
   Raleigh, NC 27695-8205, USA}
\email{jing@math.ncsu.edu}
\thanks{*Corresponding author: Naihuan Jing}
\keywords{Hyperdeterminants, $\lambda$-determinants, Macdonald polynomials}
\subjclass[2010]{Primary: 05E05; Secondary: 17B69, 05E10}

\begin{abstract}
We introduce a deformation of Cayley's second hyperdeterminant for even-dimensional hypermatrices.
As an application, we formulate a generalization of the Jacobi-Trudi formula for Macdonald functions of rectangular
shapes generalizing Matsumoto's formula for Jack functions.
\end{abstract}
\maketitle
\section{Introduction}

For a $2m$ dimensional hypermatrix $A=(A(i_1,\dotsm,i_{2m}))_{1\leq i_1,\dotsc,i_{2m}\leq n}$ Cayley's
second hyperdeterminant \cite{Ca} of $A$ is a generalization of the usual determinant:
\begin{align}\label{D:hyperdeterminant}
\mathrm{det}^{[2m]}(A)=\frac{1}{n!}\sum_{\sigma_1,\dotsc,\sigma_{2m}\in\mathfrak{S}_{n}}
\prod_{i=1}^{2m}\operatorname{sgn}(\sigma_i)\prod_{i=1}^n A(\sigma_1(i),\dotsc,\sigma_{2m}(i)).
\end{align}

In \cite{GKZ} properties of hyperdeterminants and discriminants are studied in general. In particular, it is
easy to see that $\mathrm{det}^{[2m]}(A)$ is invariant when $A$ is replaced by $B\circ_kA$, where the action
is defined for example,
$(B\circ_1A)(i_1,\dotsm,i_{2m})=\sum_{j=1}^n B_{i_1,j}A(j,i_1,\dotsm,i_{2m})$ for any $B\in \mathrm{GL}_n$.
This means that the hyperdeterminant is an invariant under the action of $\mathrm{GL}_n^{\otimes 2m}$.
In general, it is a challenging problem to come up with relative invariants under
the (non-diagonal) action of $\mathrm{GL}_n\times\cdots\times\mathrm{GL}_n$.

A $q$-analog Hankel determinant has been studied in \cite{ITZ} (see also \cite{BBL}) and $q$-analog of the hyperdeterminant for non-commuting matrices has been
introduced in the context of quantum groups \cite{JZ}.

In this paper, we introduce a $\lambda$-hyperdeterminant (of commuting entries) by
replacing permutations with alternating sign matrices in a natural and nontrivial manner. We will
show that the new hyperdeterminant deforms Cayley's hyperdeterminant.
As an application, we give a Jacobi-Trudi type formula for Macdonald polynomials to generalize
the previous formula of Matsumoto for Jack polynomials of rectangular shapes.
It would be interesting to uncover the relation with $q$-Hankel determinants. Also
the relationship with
$q$-deformations remains mysterious.


%
%

\section{$\lambda$-hyperdeterminants}

%

To introduce the $\lambda$-hyperdeterminant, we first consider generalization of the permutation sign in the usual
determinant, and then generalize Cayley's hyperdeterminants.

A permutation matrix is any square matrix
obtained by permuting the rows (or columns) of the identity matrix, therefore its determinant
is equal to the sign of the corresponding permutation. Each row or column of a fixed permutation matrix
has only one nonzero entry 
along each row and each column. Generalizing the notion of permutation matrices,
an alternating sign matrix is a square matrix with entries $0$'s, $1$'s and $(-1)$'s such that the sum of each row and column is $1$ and the nonzero entries in each row and column alternate in sign. Therefore, each permutation matrix is an alternating sign matrix. However, there
are more alternating sign matrices than permutation matrices of the same size. For example, the unique alternating sign $3\times3$ matrix which is not a permutation matrix is
\begin{equation}\label{e:3perm}
Q=\left[\begin{matrix}0&1&0\\1&-1&1\\0&1&0\end{matrix}\right].
\end{equation}

Clearly
the nonzero entries in each row or column of an alternating sign matrix
must start with $1$. We denote by $\mathrm{Alt}_n$ the set of $n$ by $n$ alternating sign matrices
and $P_n$ the subset of permutation matrices of size $n$. The set $P_n$, endowed with the usual matrix multiplication,
is a group isomorphic to the
symmetric group $\mathfrak S_n$. As $n$ increases, $\mathrm{Alt}_n$ contains more and more non-permutation matrices.
In fact, it is known \cite{Z, K} that $\mathrm{Alt}_n$ has a cardinality of $\prod_{i=0}^{n-1}\frac{(3i+1)!}{(n+i)!}$.

For $X=(x_{ij})\in \mathrm{Alt}_n$, we define the inversion number of $X$ by \cite{MRR}
 $$i(X)=\sum_{r>i,s<j}x_{ij}x_{rs}.$$
If $P=(\delta_{i,\pi_i})$ is the permutation matrix associated to the permutation $\pi\in\mathfrak S_n$,
Then $i(P)$ counts the number of pairs $(i, j)$ such that $i<j$ but $\pi_i>\pi_j$, i.e., $i(P)$ is
the inversion number of $\pi$, thus explains the name for alternating sign matrices.

 Let $n(X)$ be the number of negative entries in $X$.
 For any $n\times n$ matrix $A=(a_{ij})$, we write $A^X=\prod_{i,j}a_{ij}^{x_{ij}}$. As usual if $x_{ij}=0$, $a_{ij}^{x_{ij}}=1$.
 In general $A^X$ is a quotient of two monomials and the degree of the denominator being $n(X)$.

Let $\lambda$
be a parameter and $A$ an $n\times n$ matrix. Following Robbins and Rumsey \cite{RR}, we introduce the $\lambda$-determinant as follows.
\begin{align}\nonumber
\mathrm{det}_\lambda(A)&=\sum_{X\in \mathrm{Alt}_n}(-\lambda)^{i(X)}(1-\lambda^{-1})^{n(X)}A^X\\ \label{D:lambdadet}
&=\sum_{\pi\in\mathfrak S_n}(-\lambda)^{i(\pi)}a_{1\pi_1}\cdots a_{n\pi_n}+\sum_{X\in \mathrm{Alt}_n\backslash P_n}(-\lambda)^{i(X)}(1-\lambda^{-1})^{n(X)}A^X.
\end{align}

Note that the second equality holds even for $\lambda=1$ due to 
$0^0=1$.
We remark that the original $\lambda$-determinant \cite{RR} corresponds to a sign change of ours, while
our definition deforms the usual determinant at $\lambda=1$.
In fact when $\lambda=1$,
the second summand vanishes, therefore $\mathrm{det}_1(A)$ reduces to the usual determinant. In general, $\det_{\lambda}(A)$ is a rational
function in the variable $a_{ij}$ or rather a polynomial function in the variable $a_{ij}^{\pm 1}$. 
For any $3\times 3$ matrix $A=(a_{ij})$ 
\begin{align*}
{\det}_{\lambda}(A)&=a_{11}a_{22}a_{33}-\lambda a_{12}a_{21}a_{33}-\lambda^3 a_{13}a_{22}a_{31}-\lambda a_{11}a_{23}a_{32}\\
&+\lambda^2 a_{12}a_{23}a_{31}+\lambda^2 a_{13}a_{21}a_{32}+\lambda(1-\lambda) \frac{a_{12}a_{21}a_{23}a_{32}}{a_{22}},
\end{align*}
where the last term is due to the alternating sign matrix $Q$ in \eqref{e:3perm}.

Another example is a $\lambda$-deformation of
the Vandermonde determinant \cite{RR}: For commutating variables $x_i$, $1\leq i\leq n$, one has that
\begin{align}\label{F:lambdavandermonde}
{\det}_{\lambda}(x_i^{j-1})=\prod_{1\leq i<j\leq n}(x_j-\lambda x_i),
\end{align}
which is a continuous deformation of the Vandermonde determinant at $\lambda=1$.
When $\lambda=-1$, this formula has an interesting connection with the Pfaffian $\Pf$:
\begin{align}
\frac{{\det}_{1}(x_i^{j-1})}{{\det}_{-1}(x_i^{j-1})}=\prod_{1\leq i<j\leq n}\frac{x_j-x_i}{x_j+x_i}=\Pf(\frac{x_j-x_i}{x_j+x_i}),
\end{align}
where the last equality is Schur's formula for Pfaffian \cite{Ma}.
Here the Pfaffian $\Pf(M)$ of an antisymmetric matrix $M$ is defined by $\Pf(M)=\sqrt{\det(M)}$.

A permutation matrix $P$ of size $n$ sends the row vector $(1, 2, \ldots, n)$  to the vector $(\pi(1), \pi(2), \ldots, \pi(n))$ of its
corresponding permutation $\pi\in \mathfrak S_n$  by right matrix multiplication, i.e.
\begin{equation}\label{e:perm}
(1, 2, \cdots, n)P=(\pi(1), \pi(2), \ldots, \pi(n)),
\end{equation}
where $\pi_i$ are distinct integers from $[1, n]$.
Similarly $P$ also sends the column vector by left matrix multiplication:
$$P(1, 2, \ldots, )^t=(\pi^{-1}(1), \pi^{-1}(2), \ldots, \pi^{-1}(n))^t.
$$

 Suppose $X=(x_{ij})$ is an alternating sign matrix of size $n$, we claim that
 $X$ sends the vector $(1, 2, \cdots, n)^t$ to the vector $(X(1), X(2), \ldots, X(n))^t$ by right 
  matrix multiplication and
 $1\leq X(i)\leq n$. So for each alternating sign matrix $X$, we define the generalized permutation
 $(X(1), X(2),\ldots, X(n))$ by
 \begin{align}\label{e:genperm}
 X\begin{bmatrix} 1 \\ 2 \\ \vdots\\ n\end{bmatrix}:=
 \begin{bmatrix} X(1) \\  X(2)\\ \vdots \\ X(n)\end{bmatrix},
 \end{align}
 where $X(i)$ may not be distinct (cf. \eqref{e:perm}). Note that $(X(1), X(2), \ldots, X(n))$ descends to
 the permutation $(\pi(1), \pi(2), \cdots, \pi(n))$ when $X$ is the permutation matrix corresponding to
 $\pi\in\mathfrak S_n$. For example,
 $(1,2,3)Q=(2,2,2)$ for $Q$ in \eqref{e:3perm}.
Suppose there is $x_{ij}=-1$, then there are odd number of
nonzero entries along the $i$th row and they are alternatively $\pm 1$ located at the $j_1, \cdots, j_{2k+1}$th columns
such that $1\leq j_1<j_2<\cdots<j_{2k+1}\leq n$ and $k\geq 1$. Therefore
\begin{align*}
X(i)&=\sum_{j=1}^njx_{ij}=j_1-j_2+j_3-j_4+\cdots-j_{2k}+j_{2k+1}\\
&=j_1+(j_3-j_2)+\cdots+(j_{2k+1}-j_{2k})\geq k+1
\end{align*}
and
\begin{align*}
\sum_{j=1}^njx_{ij}&=(j_1-j_2)+\cdots+(j_{2k-1}-j_{2k})+j_{2k+1}< 
j_{2k+1}\leq n.
\end{align*}
Hence one always has that $1\leq X(i)=\sum_{j=1}^njx_{ij}\leq n$ for any alternating sign matrix $X=(x_{ij})$.

For fixed real numbers $q$ and $a$, we use the conventional $q$-analogue of $a$:
\begin{equation}
(a; q)_n=(1-a)(1-aq)\cdots (1-aq^{n-1}).
\end{equation}

With this preparation, we now introduce the
 notion of $\lambda$-hyperdeterminants.
\begin{definition}\label{D:lambdadeterminant}
Let $M=(M(i_{1},i_{2},\dotsc,i_{2m}))_{1\leq i_1,i_2,\dotsc, i_{2m}\leq n}$ be a $2m$ dimensional
hypermatrix, the $\lambda$-hyperdeterminant 
of $M$ is defined by
\begin{align}\label{D:qhyperdeterminant}
&\mathrm{det}^{[2m]}_{\lambda}(M)=\frac{(1-\lambda)^n}{(\lambda;\lambda)_n}\sum_{X_{1},X_2,\dotsc,X_{2m}\in \mathrm{Alt}_n}
\phi_\lambda(X_1,X_2,\dotsc,X_{2m})\\ \nonumber
&\qquad\qquad\qquad\times\prod_{i=1}^n M(X_1(i), X_2(i), \dotsc, X_{2m}(i)),
\end{align}
where the generalized sign factor $\phi_\lambda(X_1,\dotsc,X_{2m})$ is the following expression
\begin{align}\label{phi}
\prod_{r=1}^{m}(-\lambda^{r-1})^{i(X_r)}(1-\lambda^{r-1})^{n(X_r)}(-\lambda^r)^{i(X_{m+r})}(1-\lambda^{r})^{n(X_{m+r})},
\end{align}
and the generalized permutation $(X(1), \ldots, X(n))$ for $X\in \mathrm{Alt}_n$ was defined in \eqref{e:genperm}.
\end{definition}
Note that the $\lambda$-hyperdeterminant is different from the $q$-hyperdeterminant defined on
quantum linear group \cite{JZ}.

We also remark that
alternating sign matrices with $n(X_1)>0$ do not contribute to the sum, the first summation variable $X_1$ is therefore always taken as a permutation matrix. Moreover, one can even fix the
first indices to be $1, 2, \ldots, n$ and drop
the quantum factor. 

We now explain how the $\lambda$-hyperdeterminant
deforms the Cayley hyperdeterminant.

\begin{proposition} For any $2m$-dimensional hypermatrix
$$A=(a_{i_1,\ldots, i_{2m}})_{1\leq i_1, \ldots, i_{2m}\leq n}$$
one has that
\begin{equation}
\lim_{\lambda \to 1}\mathrm{det}^{[2m]}_{\lambda}(A)=\mathrm{det}^{[2m]}(A).
\end{equation}
\end{proposition}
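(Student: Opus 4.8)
The plan is to show that as $\lambda \to 1$, every summand in the $\lambda$-hyperdeterminant \eqref{D:qhyperdeterminant} that comes from a genuinely non-permutation alternating sign matrix dies, while the surviving permutation-matrix terms collapse onto the Cayley hyperdeterminant \eqref{D:hyperdeterminant}. The key observation, already flagged in the excerpt, is the factor $(1-\lambda^r)^{n(X_{m+r})}$ (and $(1-\lambda^{r-1})^{n(X_r)}$ for $r\ge 2$) in $\phi_\lambda$: whenever some $X_s$ with $s\neq 1$ and $s\neq m+1$ is not a permutation matrix, it has $n(X_s)>0$, so the corresponding power of $(1-\lambda^{r})$ tends to $0$ as $\lambda\to 1$. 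The two "exceptional" slots are $X_1$ (where the prefactor is $(1-\lambda^{0})^{n(X_1)} = 0^{n(X_1)}$, forcing $n(X_1)=0$ already, so $X_1$ is a permutation for all $\lambda$, as the remark notes) and $X_{m+1}$ (where the exponent base is $1-\lambda^{1}=1-\lambda$, which also vanishes at $\lambda=1$). So for every slot the factor vanishes unless $n(X_s)=0$.

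First I would make precise the claim that only the all-permutation tuples survive. For a fixed tuple $(X_1,\dotsc,X_{2m})$ in which at least one $X_s\notin P_n$, I would exhibit a strictly positive power of $(1-\lambda)$ dividing $\phi_\lambda(X_1,\dotsc,X_{2m})$; since each entry $M(\cdots)$ and each $A^{X}$-type monomial stays bounded near $\lambda=1$ (the denominators $n(X)$ are finite and fixed), that summand tends to $0$. The one subtlety here is that $A^{X}$ can be a genuine rational function when $n(X)>0$; but the whole term carries the vanishing $(1-\lambda)$-factor, so provided we avoid zeros in the denominator the product is continuous and tends to $0$. One should note that at the permutation tuples the monomials $\prod_i A(X_1(i),\dotsc,X_{2m}(i))$ are ordinary products with no denominators, matching the Cayley expression.

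Next I would handle the overall normalization $\frac{(1-\lambda)^n}{(\lambda;\lambda)_n}$. Writing $(\lambda;\lambda)_n=\prod_{k=1}^{n}(1-\lambda^k)$ and $1-\lambda^k=(1-\lambda)(1+\lambda+\dotsb+\lambda^{k-1})$, one sees $(\lambda;\lambda)_n=(1-\lambda)^n\prod_{k=1}^{n}(1+\lambda+\dotsb+\lambda^{k-1})$, so the prefactor equals $1/\prod_{k=1}^{n}(1+\lambda+\dotsb+\lambda^{k-1})$, which at $\lambda=1$ is $1/\prod_{k=1}^n k = 1/n!$. This produces exactly the $\frac{1}{n!}$ appearing in \eqref{D:hyperdeterminant}. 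Finally, restricting to permutation tuples $X_r=P_{\sigma_r}$, I would check that $\phi_\lambda$ specializes at $\lambda=1$ to $\prod_{i=1}^{2m}\operatorname{sgn}(\sigma_i)$: each factor $(1-\lambda^{j})^{n(X)}$ becomes $1$ since $n(P_\sigma)=0$, and each $(-\lambda^{j})^{i(P_\sigma)}$ becomes $(-1)^{i(\sigma)}=\operatorname{sgn}(\sigma)$, so the product over the $2m$ slots gives precisely the product of signs, and $P_\sigma(i)=\sigma(i)$ under \eqref{e:genperm}.

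The main obstacle I expect is not the sign bookkeeping but the interchange of the limit with the sum together with the presence of rational monomials $A^{X}$. Because the number of summands is finite, the limit passes term by term automatically; the real care is in confirming that each non-permutation term is $(1-\lambda)$-divisible \emph{with a strictly positive exponent in every slot where $X_s\neq P_n$}, including the delicate slot $X_{m+1}$ whose exponent base is $1-\lambda^1$, so that no cancellation of a vanishing numerator against a vanishing denominator can revive a dead term. Once the divisibility is established slot by slot, assembling the surviving permutation terms with the normalized $1/n!$ and the sign product $\prod_i\operatorname{sgn}(\sigma_i)$ reproduces \eqref{D:hyperdeterminant} exactly.
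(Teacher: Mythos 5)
Your proof is correct and takes essentially the same approach as the paper's: the factors $(1-\lambda^{j})^{n(X_s)}$ with $j\geq 1$ annihilate every tuple containing a non-permutation alternating sign matrix, the prefactor $(1-\lambda)^n/(\lambda;\lambda)_n$ tends to $1/n!$, and the surviving permutation tuples reproduce Cayley's sum, with your write-up supplying details (limit versus evaluation at $\lambda=1$, the $1/n!$ computation) that the paper leaves implicit. One remark: your concern about rational monomials $A^{X}$ is vacuous here, because the summands in \eqref{D:qhyperdeterminant} are plain products of hypermatrix entries $M(X_1(i),\dotsc,X_{2m}(i))$ indexed by the generalized permutations, not the monomials $A^{X}$ appearing in the $\lambda$-determinant \eqref{D:lambdadet}; hence each summand is independent of $\lambda$ except through $\phi_\lambda$, so no denominators can blow up and the term-by-term limit is immediate.
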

\begin{proof}
When $\lambda=1$ the summands in the right side of (\ref{D:qhyperdeterminant}) (omitting the factor) vanish if one of $X_i$'s has at least one negative entry. Thus only permutation matrices contribute to the sum, so we can let
$X_r=P_{\sigma_r}=(\delta_{j\sigma_r(i)})\in \mathfrak S_{n}$ for $r=1, \cdots, 2m$.
Subsequently
$X_r(i)=\sum_{j=1}^{n} jX_r(i, j)=\sum_{j=1}^{n}j\delta_{j\sigma_r(i)}=\sigma_r(i)$,
and we see that the right side of (\ref{D:qhyperdeterminant}) matches exactly
with that of (\ref{D:hyperdeterminant}).
\end{proof}

\section{Hyperdeterminant formula for Macdonald functions}
We now generalize Matsumoto's
hyperdeterminant formula for Jack polynomials \cite{Ma} as an application of the $\lambda$-hyperdeterminant.

Recall that the classical Jacobi-Trudi formula expresses the Schur function associated to
partition $\mu$ as a determinant of simpler Schur functions of row shapes:
\begin{align}
s_{\mu}(x)=\det(s_{\mu_i-i+j}(x)).
\end{align}

In \cite{Ma}, Matsumoto gave a simple formula for Jack functions of rectangular shapes using the hyperdeterminant (see also \cite{BBL}).

\begin{proposition}\cite{Ma} \label{T:Matsutomo}
Let $k,s,m$ be positive integers. The rectangular Jack functions $Q_{(k^s)}(m^{-1})$ can be expressed
compactly as follows.
\begin{align}\label{F:hyperdeterminant}
\frac{(sm)!}{(m!)^{s}}Q_{(k^s)}(m^{-1})=s!\mathrm{det} ^{[2m]}(Q_{k+\sum_{j=1}^m (i_{m+j}-i_j)})_{1\leq i_1,\dotsc,i_{2m}\leq s}.
\end{align}
\end{proposition}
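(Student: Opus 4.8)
The plan is to expand the Cayley hyperdeterminant on the right through its defining sum over permutations, turn the one-row Jack functions into a coefficient-extraction via their generating function at parameter $m^{-1}$, and then exploit the fact that the entry depends only on $\sum_{j=1}^m(i_{m+j}-i_j)$ to factor the resulting $2m$-fold permutation sum. Throughout I write Jack functions with the fixed parameter $m^{-1}$, denoting them $Q_\lambda(x)$ and $P_\lambda(t)$, and $[t^a]F$ for the coefficient of $t^a$ in $F$. First I would write, with $c_i=k+\sum_{j=1}^m(\sigma_{m+j}(i)-\sigma_j(i))$,
\[
s!\,\mathrm{det}^{[2m]}\!\Big(Q_{k+\sum_{j=1}^m(i_{m+j}-i_j)}(m^{-1})\Big)=\sum_{\sigma_1,\dotsc,\sigma_{2m}\in\mathfrak S_s}\Big(\prod_{r=1}^{2m}\operatorname{sgn}\sigma_r\Big)\prod_{i=1}^s Q_{(c_i)}(x),
\]
and then, using $\sum_{r\ge0}Q_{(r)}(x)\,t^r=\prod_{l}(1-x_lt)^{-m}=:H(t)^m$, replace each factor $Q_{(c_i)}(x)$ by $[t_i^{\,k}]\,t_i^{\sum_{j=1}^m(\sigma_j(i)-\sigma_{m+j}(i))}H(t_i)^m$ in an auxiliary variable $t_i$.

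The decisive step is to pull the extraction outside. The monomial $\prod_i t_i^{\sum_j(\sigma_j(i)-\sigma_{m+j}(i))}$ factors as $\prod_{j=1}^m\big(\prod_i t_i^{\sigma_j(i)}\big)\big(\prod_i t_i^{-\sigma_{m+j}(i)}\big)$, and the sign as $\prod_{j=1}^m\operatorname{sgn}(\sigma_j)\operatorname{sgn}(\sigma_{m+j})$; hence the sum over $(\sigma_1,\dotsc,\sigma_{2m})$ equals $\big(\det(t_i^{\,j})\det(t_i^{-j})\big)^m$, a product of Vandermonde determinants. Since $\det(t_i^{\,j})\det(t_i^{-j})=(-1)^{\binom s2}\prod_{i<i'}(t_{i'}-t_i)^2\prod_i t_i^{1-s}$, the right-hand side collapses to
\[
(-1)^{m\binom s2}\Big[\prod_{i=1}^s t_i^{\,k+m(s-1)}\Big]\ \prod_{1\le i<i'\le s}(t_{i'}-t_i)^{2m}\ \prod_{i,l}(1-x_lt_i)^{-m},
\]
a single coefficient-extraction of the Jack Cauchy kernel at parameter $m^{-1}$ against the $2m$-th power of the Vandermonde.

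It remains to identify this with $\frac{(sm)!}{(m!)^s}Q_{(k^s)}(x)$. Here I would insert the Jack Cauchy identity $\prod_{i,l}(1-x_lt_i)^{-m}=\sum_\lambda Q_\lambda(x)\,P_\lambda(t)$, interchange sum and extraction, and reduce everything to the purely $t$-side evaluation
\[
\Big[\prod_{i=1}^s t_i^{\,k+m(s-1)}\Big]\ \prod_{1\le i<i'\le s}(t_{i'}-t_i)^{2m}\,P_\lambda(t)=\delta_{\lambda,(k^s)}\,(-1)^{m\binom s2}\,\frac{(sm)!}{(m!)^s};
\]
feeding this back cancels the two factors $(-1)^{m\binom s2}$ and produces the claim. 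The hard part is exactly this last evaluation: one must show that the coefficient of the balanced monomial $t_1^{N}\dotsm t_s^{N}$ with $N=k+m(s-1)$ in $\prod_{i<i'}(t_{i'}-t_i)^{2m}P_\lambda(t)$ vanishes unless $\lambda$ is the rectangle $(k^s)$, and equals the stated multinomial there. I expect the vanishing from a dominance/triangularity argument, the exponent $(N^s)$ being extremal among the monomials that occur, so that only $\lambda=(k^s)$ can reach it; the constant $\tfrac{(sm)!}{(m!)^s}=\binom{sm}{m,\dotsc,m}$ should fall out of the constant-term (Jack--Selberg) theory attached to the weight $\prod_{i<i'}(t_{i'}-t_i)^{2m}=\prod(t_{i'}-t_i)^{2/\al}$ with $\al=m^{-1}$. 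The normalization can be pinned down independently at $s=1$, where the extraction is just $Q_{(k)}(x)$, and at $m=1$, where it degenerates into the classical Jacobi--Trudi determinant for $s_{(k^s)}$.
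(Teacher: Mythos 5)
Your reduction is sound as far as it goes, and its skeleton in fact mirrors the method the paper uses for the $q$-analogue (Theorem~\ref{T:rect}): there the Dyson-type kernel is rewritten as a product of (deformed) Vandermonde determinants and the one-row generating function supplies a coefficient extraction. Your first steps are all correct: expanding $s!\,\mathrm{det}^{[2m]}$ over $2m$-tuples of permutations, encoding $Q_{(c_i)}(x)=[t_i^k]\,t_i^{\sum_j(\sigma_j(i)-\sigma_{m+j}(i))}\prod_l(1-x_lt_i)^{-m}$ via the generating function at parameter $m^{-1}$, and factoring the $2m$-fold sum into $\det(t_i^{\,j})^m\det(t_i^{-j})^m=(-1)^{m\binom{s}{2}}\prod_{i<i'}(t_{i'}-t_i)^{2m}\prod_i t_i^{m(1-s)}$.

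The genuine gap is the final evaluation, which you yourself flag as the hard part, and the heuristic you offer for it is incorrect. The exponent $(N^s)$ with $N=k+m(s-1)$ is the \emph{minimal} element in dominance order among partitions of $sN$ with at most $s$ parts, not an extremal one that only $\lambda=(k^s)$ can reach: monomials of $\prod_{i<i'}(t_{i'}-t_i)^{2m}P_\lambda(t)$ hit $(N^s)$ for every $\lambda\vdash sk$ with $\ell(\lambda)\le s$, and the vanishing for $\lambda\neq(k^s)$ comes from sign cancellation, not from absence of terms. Concretely, for $s=2$, $m=1$, $k=1$, $\lambda=(2)$, the coefficient of $t_1^2t_2^2$ in $(t_2-t_1)^2P_{(2)}(t)$ is $1-2+1=0$: three monomials reach it and cancel, so no triangularity argument can work. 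What does work is to rewrite
\begin{equation*}
\prod_{1\le i<i'\le s}(t_{i'}-t_i)^{2m}=(-1)^{m\binom{s}{2}}\prod_i t_i^{m(s-1)}\prod_{i\ne j}\bigl(1-t_i/t_j\bigr)^m,
\end{equation*}
so that your extraction becomes $(-1)^{m\binom{s}{2}}$ times the constant term of $\prod_{i\ne j}(1-t_i/t_j)^m\,P_\lambda(t)\prod_i t_i^{-k}$, i.e.\ $(-1)^{m\binom{s}{2}}s!\langle P_\lambda,P_{(k^s)}\rangle'_s$ for Macdonald's constant-term inner product in $s$ variables (note $P_{(k^s)}(t_1,\dotsc,t_s)=(t_1\cdots t_s)^k$). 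The vanishing is then the orthogonality of Jack polynomials with respect to $\langle\cdot,\cdot\rangle'_s$, and the constant is Dyson's identity $\mathrm{CT}\prod_{i\ne j}(1-t_i/t_j)^m=(sm)!/(m!)^s$. These two facts are exactly the nontrivial content of the proposition: the paper itself does not reprove it (it cites Matsumoto), and in its proof of the $q$-analogue it outsources precisely this content to Proposition~\ref{T:loweringFor4RecMac}, whose $q\to1$ limit is equivalent to your unproven evaluation. Until you invoke or prove the orthogonality-plus-Dyson statement, your argument is incomplete.
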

As the Schur function $s_{\mu}$ is the specialization $Q_{\mu}(1)$ of the Jack function, one sees immediately that this formula specializes to the Jacobi-Trudi formula for rectangular Schur functions.

Macdonald symmetric functions $Q_{\mu}(q,t)$ \cite{M} are a family of orthogonal symmetric functions with two parameters $q,t$ and labeled by partition $\mu$. In this paper we consider the case that $t=q^m$, $m$
any positive integer. When $q$ approaches to $1$, $Q_{\mu}(q,q^m)$ specializes to the Jack symmetric function $Q_{\mu}(m^{-1})$ \cite{M} which has 
many applications (eg. \cite{CJ1}). When the partition $\mu=(k^s)$, the
symmetric function $Q_{\mu}(q, t)$ or $Q_{\mu}(m^{-1})$ is referred as the rectangular (shaped) Macdonald function.

We now state our main result.
\begin{theorem}\label{T:rect}
Let $k,s,m$ be positive integers. Up to a scalar factor, the rectangular Macdonald function $Q_{(k^s)}(q,q^m)$
is a $2m$-dimensional $\lambda$-hyperdeterminant:
\begin{align}\nonumber
&\frac{(q;q)_{sm}}{(q;q)_m^s}Q_{(k^s)}(q,q^m)\\ \label{F:qhyperdetRecMac}
=&\frac{(q;q)_s}{(1-q)^s}\mathrm{det}^{[2m]}_q(Q_{k+\sum_{r=1}^m(i_{m+r}-i_r)}(q;q^m))_{1\leq i_1,\dotsc,i_{2m}\leq s},
\end{align}
where $\det^{[2m]}_q$ is the $\lambda$-hyperdeterminant with $\lambda=q$.
\end{theorem}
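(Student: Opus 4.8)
The plan is to expand the right side of \eqref{F:qhyperdetRecMac} straight from Definition~\ref{D:lambdadeterminant} and to match it against a generating-function expansion of the rectangular Macdonald function. First I would note that the normalizing factor $\frac{(1-q)^{s}}{(q;q)_{s}}$ carried by $\det^{[2m]}_{q}$ (take $n=s$, $\lambda=q$) is the exact reciprocal of the scalar $\frac{(q;q)_{s}}{(1-q)^{s}}$ multiplying it, so the right side reduces to the bare sum
\[
\sum_{X_1,\dots,X_{2m}\in\mathrm{Alt}_s}\phi_q(X_1,\dots,X_{2m})\prod_{i=1}^{s}Q_{k+\sum_{r=1}^{m}(X_{m+r}(i)-X_r(i))}(q;q^m).
\]
By the remark following Definition~\ref{D:lambdadeterminant} the index $X_1$ is forced to be a permutation matrix. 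I would also record that $\sum_{i=1}^{s}X(i)=\sum_{j}j\sum_{i}x_{ij}=\binom{s+1}{2}$ for every $X\in\mathrm{Alt}_s$, since each column sums to $1$; hence every surviving term is a product of $s$ one-row Macdonald functions of total degree $sk$, matching $Q_{(k^s)}$.

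The analytic engine is the factorization of the one-row generating function at $t=q^{m}$. Starting from $\sum_{r\ge0}Q_{(r)}(q,t)\,z^{r}=\prod_{i}\frac{(tx_iz;q)_\infty}{(x_iz;q)_\infty}$ and setting $t=q^{m}$ gives
\[
\sum_{r\ge0}Q_{(r)}(q;q^m)\,z^{r}=\prod_{l=0}^{m-1}H(q^{l}z),\qquad H(z)=\sum_{n\ge0}h_{n}z^{n},
\]
so that each entry splits into $m$ layers, $Q_{(\alpha)}(q;q^m)=\sum_{b_0+\cdots+b_{m-1}=\alpha}q^{\sum_{l}l\,b_l}h_{b_0}\cdots h_{b_{m-1}}$. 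Substituting this into the product over $i$ converts the bare sum into a sum over an $s\times m$ array of nonnegative integers, weighted by a power of $q$ and by a product of ordinary complete homogeneous functions $h_n$. The decisive structural observation is that these layer weights $q^{l}$, $0\le l\le m-1$, are exactly aligned with the exponents $\lambda^{r-1}$ (and, shifted by one, $\lambda^{r}$) appearing in the sign factor \eqref{phi}; this is what couples the $2m$ hypermatrix directions to the $m$-fold product structure of Macdonald functions at $t=q^{m}$.

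With the entries layered I would perform the sums over the alternating sign matrices $X_1,\dots,X_{2m}$ one direction at a time using the defining property of the $\lambda$-determinant. The Robbins--Rumsey weights $(-\lambda)^{i(X)}(1-\lambda^{-1})^{n(X)}$ are designed so that an alternating-sign-matrix antisymmetrization of monomials collapses into a $\lambda$-deformed Vandermonde, as in \eqref{F:lambdavandermonde}; carrying out these collapses layer by layer should turn the combined array-and-ASM sum into a $\lambda$-deformed bialternant for $Q_{(k^s)}(q,q^m)$. It is precisely here that non-permutation alternating sign matrices are indispensable: for $q\ne1$ the terms with $n(X)>0$ are nonzero and furnish the genuine $q$-corrections separating Macdonald from Jack, whereas at $q=1$ they all vanish and the whole computation degenerates to Matsumoto's permutation sum.

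Finally I would reassemble the layers and track the scalar: reorganizing the collapsed sums into the known expansion of the rectangular Macdonald function should reproduce the constant $\frac{(q;q)_{sm}}{(q;q)_m^{s}}$, and as a consistency check the limit $q\to1$ must return Matsumoto's Proposition~\ref{T:Matsutomo}. I expect the third step to be the main obstacle: proving that the $\phi_q$-weighted alternating-sign-matrix sums reproduce \emph{exactly} the Macdonald deformation and not merely its Jack limit. Controlling the non-permutation contributions---checking that the $(1-\lambda^{j})$ factors of \eqref{phi} assemble, across all $2m$ directions and $m$ layers, into the correct $q$-deformed antisymmetrizer---is the delicate combinatorial core of the argument.
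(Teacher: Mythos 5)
Your opening bookkeeping is sound and coincides with the paper's final step: cancelling $\frac{(1-q)^s}{(q;q)_s}$ against $\frac{(q;q)_s}{(1-q)^s}$ and unwinding the generalized permutations $X(i)=\sum_j jX(i,j)$ reduces the theorem to the identity
\begin{align*}
&\sum_{X_1,\dots,X_{2m}\in\mathrm{Alt}_s}\phi_q(X_1,\dots,X_{2m})\prod_{i=1}^{s}Q_{k+\sum_{r=1}^{m}(X_{m+r}(i)-X_r(i))}(q,q^m)\\
&\qquad\qquad=\frac{(q;q)_{sm}}{(q;q)_m^s}\,Q_{(k^s)}(q,q^m).
\end{align*}
But from here your plan has a genuine gap at its core, and you half-acknowledge it. The paper closes this identity in two steps: (i) introduce auxiliary variables $z_1,\dots,z_s$ and use \eqref{F:lambdavandermonde} (in the collapsing direction you describe) to recognize the $\phi_q$-weighted ASM sum as the $q$-Dyson Laurent polynomial $F=\prod_{i<j}(z_iz_j^{-1};q)_m(qz_i^{-1}z_j;q)_m=\prod_{r=1}^m\det_{q^{r-1}}(z_i^{j-1})\det_{q^r}(z_i^{1-j})$, so that the left side above is the coefficient of $z_1^k\cdots z_s^k$ in $FG$, with $G=\prod_i\sum_j Q_j(q,q^m)z_i^j$; and (ii) invoke Proposition~\ref{T:loweringFor4RecMac} (the theorem of \cite{C}), which states precisely that this coefficient equals $\frac{(q;q)_{sm}}{(q;q)_m^s}Q_{(k^s)}(q,q^m)$. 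Your sketch contains step (i) in spirit, but it never invokes, nor replaces, step (ii) --- and step (ii) is the only nontrivial input.

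What you propose instead --- layering each entry via $Q_{(\alpha)}(q;q^m)=\sum q^{\sum_l lb_l}h_{b_0}\cdots h_{b_{m-1}}$ and collapsing everything into ``a $\lambda$-deformed bialternant for $Q_{(k^s)}(q,q^m)$'' --- cannot be completed as stated, because no such bialternant exists as a citable object: unlike Schur functions, Macdonald functions (even at $t=q^m$) admit no alternant-quotient formula, and the layered $h$-expansion does not manufacture one. After your ASM collapse you simply land back at the coefficient-extraction problem of step (ii), which is the content of an entire separate paper and is not resolvable by the sign bookkeeping you outline. Your claimed alignment between the layer weights $q^l$ and the exponents $\lambda^{r-1},\lambda^r$ in \eqref{phi} is also not a real mechanism: those exponents arise from the $2m$ factors of the $q$-Dyson product $F$, not from the factorization of the one-row generating function, which the paper never needs. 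In short, the step you defer as ``the delicate combinatorial core'' is not a technical verification but the theorem itself; it must be discharged by citing (or independently reproving) Proposition~\ref{T:loweringFor4RecMac}, and your proposal does neither.
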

\begin{remark}
 When $q=1$, one recovers the hyperdeterminant formula (\ref{F:hyperdeterminant}) for Jack functions of rectangular shapes. The formula also gives another expression of the general formula of Lassalle-Schlosser \cite{LS} (see also
 \cite{W}) in the
 special shape.
\end{remark}
To prepare its proof, we first recall the following result from \cite{C}.

\begin{proposition}\label{T:loweringFor4RecMac}
Let $\rho=(k^s)$ be an rectangular partition with $k,s>0$. Let $m$ be a positive integer. Then $\frac{(q;q)_{sm}}{(q;q)_m^{s}} Q_{\rho}(q,q^m)$ is the coefficient of $z_1^kz_2^k\dotsm z_s^k$ in the Laurent polynomial $FG$, where
\begin{align}
\label{F:qDysonMac}
 F&=\prod_{1\leq i<j\leq s}\Big(z_iz_j^{-1};q\Big)_m
\Big(qz_i^{-1}z_j;q\Big)_m\\
G&=\prod_{i=1}^s\left(\sum_{j=0}^\infty Q_j(q,q^m)z_i^{j}\right).
\end{align}
\end{proposition}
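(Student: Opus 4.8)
The plan is to read the coefficient extraction in Proposition~\ref{T:loweringFor4RecMac} as a torus (constant-term) scalar product of Macdonald polynomials, so that orthogonality collapses the sum over generalized permutations to the single rectangular term. First I would recognize $G$ as a Macdonald--Cauchy kernel. Using the one-row generating function $\sum_{j\ge0}Q_{(j)}(q,q^m)z^j=\prod_a\frac{(q^mx_az;q)_\infty}{(x_az;q)_\infty}$ (the single-variable case of the Cauchy identity, with $t=q^m$), one gets $G=\prod_{i=1}^s\prod_a\frac{(q^mx_az_i;q)_\infty}{(x_az_i;q)_\infty}=\sum_{\mu}P_\mu(x)\,Q_\mu(z_1,\dots,z_s)$, the sum running over partitions $\mu$ with at most $s$ parts. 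Hence the coefficient of $z_1^k\cdots z_s^k$ in $FG$ equals $\sum_\mu P_\mu(x)\,c_\mu$ with $c_\mu:=[z_1^k\cdots z_s^k]\big(F\,Q_\mu(z)\big)$, and the whole problem reduces to evaluating $c_\mu$.

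Next I would rewrite $F$ in terms of the symmetric Macdonald density. A short $q$-Pochhammer manipulation using $\frac{(qa;q)_m}{(a;q)_m}=\frac{1-q^ma}{1-a}$ gives $F=\Delta\cdot R$, where $\Delta=\prod_{i\neq j}(z_i/z_j;q)_m$ is the (symmetric, and polynomial at $t=q^m$) density and $R=\prod_{i<j}\frac{z_i-q^mz_j}{z_i-z_j}$. Now $[z_1^k\cdots z_s^k](\cdot)$ is the constant term of $z^{-(k^s)}(\cdot)$, and the factors $z^{-(k^s)}$, $\Delta$, $Q_\mu$ are all $\mathfrak S_s$-symmetric; since the constant term is $\mathfrak S_s$-invariant I may replace $R$ by its average. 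By the classical symmetrization identity $\sum_{w\in\mathfrak S_s}w\big(\prod_{i<j}\frac{z_i-q^mz_j}{z_i-z_j}\big)=\prod_{i=1}^s\frac{1-q^{mi}}{1-q^m}$ (the $P_0=1$ normalization of Hall--Littlewood functions), this average is a constant. Writing $[s]!:=\prod_{i=1}^s\frac{1-q^{mi}}{1-q^m}$, I obtain $c_\mu=\frac{[s]!}{s!}\,\mathrm{CT}\big[z^{-(k^s)}\Delta\,Q_\mu\big]=[s]!\,\langle Q_\mu,m_{(k^s)}\rangle'_s$, where $\langle\,,\,\rangle'_s$ is Macdonald's torus scalar product in $s$ variables.

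The decisive step is that in exactly $s$ variables the rectangle is extremal: $m_{(k^s)}(z_1,\dots,z_s)=(z_1\cdots z_s)^k=e_s^k=P_{(k^s)}(z;q,q^m)$, the last equality because $(k^s)$ is the dominance-minimal partition of weight $sk$ with at most $s$ parts (equivalently, $P$ is stable under adjoining full columns). Consequently $\langle Q_\mu,m_{(k^s)}\rangle'_s=\langle Q_\mu,P_{(k^s)}\rangle'_s=b_\mu\langle P_\mu,P_{(k^s)}\rangle'_s$ (with $Q_\mu=b_\mu P_\mu$), which vanishes unless $\mu=(k^s)$ by orthogonality of the $P_\mu$ under $\langle\,,\,\rangle'_s$. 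Thus only $\mu=(k^s)$ survives, and $[z_1^k\cdots z_s^k]FG=\frac{[s]!}{s!}\,\mathrm{CT}[\Delta]\,Q_{(k^s)}(x)$; here I would exploit $P_{(k^s)}=e_s^k$ one more time, via $\mathrm{CT}[z^{-(k^s)}\Delta\,Q_{(k^s)}]=b_{(k^s)}\mathrm{CT}[\Delta]$, so that the scalar is simply $\frac{[s]!}{s!}\mathrm{CT}[\Delta]$.

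I expect the main obstacle to be the final normalization: verifying $\frac{[s]!}{s!}\,\mathrm{CT}\big[\prod_{i\neq j}(z_i/z_j;q)_m\big]=\frac{(q;q)_{sm}}{(q;q)_m^s}$. This is a direct consequence of the type-$A$ Macdonald--Morris constant-term identity, and the remaining work is the bookkeeping that turns the resulting $q$-Pochhammer product, together with $[s]!=(q^m;q^m)_s/(1-q^m)^s$, into the prefactor $(q;q)_{sm}/(q;q)_m^s$. Everything preceding is structural; it is this last product identity that actually pins down the exact shape of the constant.
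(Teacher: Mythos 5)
Your proof is correct, but it is worth noting that the paper itself does not prove this proposition at all: it is recalled verbatim from the reference \cite{C}, where it is established by a quite different method, namely the vertex operator realization of Macdonald functions of rectangular shapes (computing matrix coefficients of products of Macdonald vertex operators). Your argument is an independent, more classical derivation: you recognize $G$ as the Cauchy kernel $\sum_\mu P_\mu(x)Q_\mu(z)$, factor $F=\Delta\cdot R$ with $\Delta=\prod_{i\neq j}(z_i/z_j;q)_m$ equal to Macdonald's torus density at $t=q^m$ and $R=\prod_{i<j}\frac{z_i-q^mz_j}{z_i-z_j}$, average $R$ by the Hall--Littlewood identity $\sum_w w(R)=\prod_{i=1}^s\frac{1-q^{mi}}{1-q^m}$, and then kill all $\mu\neq(k^s)$ via the extremality $m_{(k^s)}=e_s^k=P_{(k^s)}$ in exactly $s$ variables together with orthogonality under $\langle\,,\,\rangle'_s$. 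All steps check out: the $\mu$-sum is finite by homogeneity ($c_\mu=0$ unless $|\mu|=sk$); the symmetrization is legitimate because each $\Delta\,w(R)$ is a genuine Laurent polynomial, the denominators of $w(R)$ being absorbed by the factors $(1-z_i/z_j)(1-z_j/z_i)$ of $\Delta$; and your final normalization is even slighter than you anticipate, since $\frac{1}{s!}\bigl(\prod_{i=1}^s\frac{1-q^{mi}}{1-q^m}\bigr)\mathrm{CT}[\Delta]=\mathrm{CT}[\Delta R]=\mathrm{CT}[F]$ by the same averaging identity run backwards, and $\mathrm{CT}[F]=\frac{(q;q)_{sm}}{(q;q)_m^s}$ is precisely the equal-parameter $q$-Dyson constant term (Zeilberger--Bressoud, or the Morris/Macdonald identity for $A_{s-1}$), so no separate evaluation of $\mathrm{CT}[\Delta]$ is needed. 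Compared with the vertex-operator proof in \cite{C}, your route buys transparency and self-containedness relative to Macdonald's book plus one celebrated constant-term theorem, and it explains structurally \emph{why} only the rectangle survives (dominance-minimality of $(k^s)$ in $s$ variables); the vertex-operator approach in \cite{C} buys more, in that it is part of machinery producing such realizations for other shapes (e.g.\ almost-rectangular ones, as in the corollary quoted after the bibliography) where the dominance-minimality argument alone no longer suffices.
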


We can now prove Theorem~\ref{T:rect}.
\begin{proof}
Recall that the $q$-Dyson Laurent polynomial \eqref{F:qDysonMac} is a generalization of the Vandemonde polynomial.
Using \eqref{F:lambdavandermonde}, we can rewrite the $q$-Dyson Laurent polynomial in the $z_i$ using $\lambda$-determinants.
\begin{align*}
F&=\prod_{1\leq i<j\leq s}\Big(z_iz_j^{-1};q\Big)_m
   \Big(qz_jz_i^{-1};q\Big)_m\\
 &=\prod_{1\leq i<j\leq s}\prod_{r=1}^{m}(z_j-q^{r-1}z_i)(z_j^{-1}-q^rz_i^{-1})\\
 &=\prod_{r=1}^{m}\mathrm{det}_{q^{r-1}}(z_i^{j-1})\mathrm{det}_{q^{r}}(z_i^{1-j})\quad \text{(using \eqref{F:lambdavandermonde} for s by s matrices)}\\
&=\prod_{r=1}^{m}\left(\sum_{X\in \mathrm{Alt}_s}(-q^{r-1})^{i(X)}(1-q^{r-1})^{n(X)}\prod_{i,j}z_i^{(j-1)X(i,j)} \right)\\
&\qquad\times\left(\sum_{X\in \mathrm{Alt}_s}(-q^{r})^{i(X)}(1-q^{r})^{n(X)}\prod_{i,j}z_i^{(1-j)X(i,j)}\right)\\
&=\sum_{X_1,\dotsc,X_{2m}\in \mathrm{Alt}_s}\prod_{r=1}^{m}(-q^{r-1})^{i(X_r)}(-q^{r})^{i(X_{m+r})}(1-q^{r-1})^{n(X_{r})}(1-q^r)^{n(X_{m+r})}\\
&\qquad \times\prod_{i=1}^sz_i^{\sum_{j=1}^s\sum_{r=1}^m j(X_r(i,j)-X_{m+r}(i,j))}\\
&=\sum_{X_1,\dotsc,X_{2m}\in \mathrm{Alt}_s}\phi_q(X_1,\dotsm,X_{2m})\prod_{i=1}^sz_i^{\sum_{j=1}^s\sum_{r=1}^m j(X_r(i,j)-X_{m+r}(i,j))}\\
\end{align*}
where in the last equality we have used the fact that the sum of entries in each row of an alternating sign matrix is one.
By Proposition~\ref{T:loweringFor4RecMac}, $\frac{(q;q)_{sm}}{(q;q)_m^s}Q_{(k^s)}(q,q^m)$ is the coefficient of $z_1^kz_2^k\dotsm z_s^k$ in
\begin{align*}
FG&=\sum_{X_1,\dotsc,X_{2m}\in \mathrm{Alt}_s}\Huge[\phi_q(X_1,\dotsm,X_{2m})\prod_{i=1}^sz_i^{\sum_{j=1}^s\sum_{r=1}^m j(X_r(i,j)-X_{m+r}(i,j))}\\
&\qquad\qquad\qquad \prod_{i=1}^s\left(\sum_{j=0}^\infty Q_j(q,q^m)z_i^{j}\right)\Huge],
\end{align*}
and this coefficient is seen as
\begin{align*}
\sum_{X_1,\dotsc,X_{2m}\in \mathrm{Alt}_s}\phi_q(X_1,\dotsm,X_{2m})
\prod_{i=1}^s Q_{k-\sum_{j=1}^s\sum_{r=1}^m j(X_r(i,j)-X_{m+r}(i,j))}(q,q^m).
\end{align*}

We now show formula (\ref{F:qhyperdetRecMac}) holds. The entries of matrix $M$ in Definition~\ref{D:lambdadeterminant} is now given by $M(i_1,i_2,\dotsc,i_{2m})=Q_{k+\sum_{r=1}^m(i_{m+r}-i_r)}(q,q^m)$. Then we have that
\begin{align*}
&\prod_{i=1}^sM\left(X_1(i), X_2(i), \ldots, X_{2m}(i)\right)\\ 
=&\prod_{i=1}^sM\left(\sum_{j=1}^sjX_1(i,j),\dotsm,\sum_{j=1}^sjX_{2m}(i,j)\right)\\
=&\prod_{i=1}^sQ_{k+\sum_{r=1}^m\left(\sum_{j=1}^sjX_{m+r}(i,j)-\sum_{j=1}^sX_r(i,j)\right)}(q,q^m)\\
=&\prod_{i=1}^sQ_{k+\sum_{r=1}^m\sum_{j=1}^sj(X_{m+r}(i,j)-X_r(i,j))}(q,q^m),
\end{align*}
which then implies the formula (\ref{F:qhyperdetRecMac}).
\end{proof}

\medskip

\centerline{\bf Acknowledgments}
NJ gratefully acknowledges the
partial support of Simons Foundation grant no. 523868, Humbolt Foundation,
NSFC grant no. 11531004
and MPI for Mathematics in the Sciences in Leipzig during this work.

\bibliographystyle{amsalpha}

\end{document}